\newtheorem{thm}{Theorem}
\newtheorem{prop}[thm]{Proposition}
\newtheorem{lem}[thm]{Lemma}
\theoremstyle{remark}
\newcommand{\FF}{\mathbb{F}}
\DeclareMathOperator{\supp}{supp}
\begin{document}

\title{On the existence of $s$-extremal singly even self-dual codes}

\author{
Masaaki Harada\thanks{
Research Center for Pure and Applied Mathematics,
Graduate School of Information Sciences,
Tohoku University, Sendai 980--8579, Japan.
email: {\tt mharada@tohoku.ac.jp}.}
}

\maketitle

\begin{abstract}
We construct new $s$-extremal singly even self-dual codes 
with minimum weights $8,10,12$ and $14$.
We also give tables for
the currently known results on the existence of
$s$-extremal singly even self-dual codes
with minimum weights $8,10,12$ and $14$.
\end{abstract}

\section{Introduction}

Codes over $\FF_2$ are called {\em binary}, where
$\FF_2$ denotes the binary field.
All codes in this note are binary.
The {\em dual} code $C^{\perp}$ of a code 
$C$ of length $n$ is defined as
$
C^{\perp}=
\{x \in \FF_2^n \mid x \cdot y = 0 \text{ for all } y \in C\},
$
where $x \cdot y$ is the standard inner product.
A code $C$ is called 
{\em self-dual} if $C = C^{\perp}$. 
Self-dual codes are divided into two classes.
A self-dual code $C$ is {\em doubly even} if the weights of all
codewords of $C$ are multiples of $4$, and {\em
singly even} if there is at least one codeword of weight $\equiv 2
\pmod 4$.
It is  known that a self-dual code of length $n$ exists 
if and only if  $n$ is even, and
a doubly even self-dual code of length $n$
exists if and only if $n \equiv 0 \pmod 8$.
The minimum weight $d$ of a self-dual code of length $n$
is bounded by
$d  \le 4  \lfloor{n/24} \rfloor + 4$ 
if $n \not\equiv 22 \pmod {24}$ and
$d  \le 4 \lfloor{n/24} \rfloor + 6$ 
if $n \equiv 22 \pmod {24}$~\cite{MS73} and \cite{Rains}.
A self-dual code meeting the bound is called  {\em extremal}.
A self-dual code is called  {\em optimal} if it has the largest
minimum weight among all self-dual codes of that length.
Two codes are {\em equivalent} if one can be
obtained from the other by permuting the coordinates.

Let $C$ be a singly even self-dual code and let $C_0$ denote the
subcode of codewords of weight $\equiv0\pmod4$. Then $C_0$ is
a subcode of codimension $1$ \cite{C-S}. The {\em shadow} $S$ of $C$ is
defined to be $C_0^\perp \setminus C$~\cite{C-S}.
The minimum weight $d(D)$ of a coset $D$ is the minimum non-zero weight of
all vectors in $D$.
Let $C$ be a singly even 
self-dual code of length $n$ and let $S$ be the shadow of $C$.
It was shown in~\cite{BG} that 
\begin{equation}\label{eq:BG}
d(S) \le \frac{n}{2}+4-2d(C),
\end{equation}
unless $n \equiv 22 \pmod{24}$ and $d(C)= 4 \lfloor n/24 \rfloor +6$,
in such a case $d(S) = n/2+8-2d(C)$.
A singly even self-dual code meeting the bound
is called {\em $s$-extremal}~\cite{BG}.
It was shown that $s$-extremal singly even
self-dual codes imply
$1$-designs sometimes $2$-designs~\cite[Theorem~3.1]{BG}.
In addition, the existence of many $s$-extremal singly even self-dual codes was
mentioned in~\cite{BG}.
Some restrictions on lengths
for which there is an $s$-extremal singly even self-dual code
are known~\cite{G07}, \cite{HK08}, \cite{HM} and \cite{Kim}.


A classification of 
$s$-extremal singly even self-dual codes with
minimum weight $4$ was done in~\cite{Elkies}.
Up to equivalence, 
all $s$-extremal singly even self-dual codes with
minimum weight $6$ are known
(\cite{AGKSS}, \cite[p.~29]{BG}, \cite{B06} and~\cite{HM36}).
The previously known results on the existence of
$s$-extremal singly even self-dual codes can be found in~\cite{BG}
for minimum weights $8, 10, \ldots, 18$.
In this note, we explicitly construct new $s$-extremal singly even
self-dual codes with minimum weights $8,10,12$ and $14$.
All the new $s$-extremal singly even self-dual codes are
extremal or optimal, except for the codes of length $56$.
We also give tables for
the currently known results on the existence of
$s$-extremal singly even self-dual codes
with minimum weights $8,10,12$ and $14$.

This note is organized as follows.
In Section~\ref{sec:con}, we give methods for constructing
$s$-extremal singly even self-dual codes with parameters
$[24k+4,12k+2,4k+2]$,
$[24k+12,12k+6,4k+4]$ and
$[24k+20,12k+10,4k+4]$,
by considering neighbors.
In Section~\ref{sec:d8}, a great number of new 
$s$-extremal and extremal singly even self-dual codes with minimum
weight $8$ are explicitly constructed for lengths $42$ and $44$.
We note that the only lengths for which 
the classification has not been done yet are $42$ and $44$
for minimum weight $8$.
In Section~\ref{sec:d10}, 
new $s$-extremal singly even self-dual codes with minimum weight $10$
are explicitly constructed for length $56$.
In Section~\ref{sec:d12}, 
new $s$-extremal and extremal singly even self-dual codes with minimum weight $12$
are explicitly constructed for lengths $62$ and $64$.
In Section~\ref{sec:d14}, many new 
$s$-extremal and optimal singly even self-dual codes with minimum weight $14$
are explicitly constructed for length $78$.
Also, the current knowledge on the existence of 
$s$-extremal singly even self-dual codes with minimum weights $8,10,12$
and $14$ is presented in
Tables~\ref{Tab:d8}, \ref{Tab:d10}, \ref{Tab:d12}
and~\ref{Tab:d14}, respectively.
All computer calculations in Sections~\ref{sec:d8},
\ref{sec:d10}, \ref{sec:d12} and \ref{sec:d14}
were done with the help of {\sc Magma}~\cite{Magma}.

 \section{Construction of $s$-extremal singly even self-dual codes of
lengths $24k+4$, $24k+12$ and $24k+20$}\label{sec:con}

Two self-dual codes $C$ and $C'$ of length $n$
are said to be {\em neighbors} if $\dim(C \cap C')=n/2-1$. 
Let $C$ be a singly even self-dual code and let $C_0$ denote the
doubly even subcode, that is, the
subcode of codewords having weight $\equiv0\pmod4$. 
Since  $C_0$ is a subcode of codimenion $1$~\cite{C-S},
there are cosets
$C_1,C_2,C_3$ of $C_0$ such that $C_0^\perp = C_0 \cup C_1 \cup
C_2 \cup C_3 $, where $C = C_0  \cup C_2$ and $S = C_1 \cup C_3$.
If $C$ is a singly even
self-dual code of length $n \equiv 0 \pmod 4$, then 
$C_0 \cup C_1$ and $C_0 \cup C_3$ are self-dual (see~\cite{BP}).
It is trivial that $C_0 \cup C_1$ and $C_0 \cup C_3$ are
neighbors of $C$.
We denote these neighbors by $N_1(C)$ and $N_3(C)$, respectively.
Since
$C_0^\perp = C_0 \cup C_1 \cup C_2 \cup C_3 $ and $C_0$ is the
doubly even subcode for both $N_1(C)$ and $N_3(C)$, 
the shadows of $N_1(C)$ and $N_3(C)$ are
$C_2 \cup C_3$ and $C_2 \cup C_1$, respectively.
In addition, if $n \equiv 0 \pmod 8$, then
the two codes are doubly even self-dual (see~\cite{BP}).


\begin{prop}\label{prop:sec3}
\begin{enumerate}
\renewcommand{\labelenumi}{\rm (\roman{enumi})}
\item
Let $C$ be an $s$-extremal singly even self-dual $[24k+4,12k+2,4k+2]$ code.
Then the neighbors $N_1(C)$ and $N_3(C)$ of  $C$
are also $s$-extremal singly even self-dual $[24k+4,12k+2,4k+2]$ codes.
\item
Let $C$ be an $s$-extremal and extremal singly even self-dual $[24k+20,12k+10,4k+4]$ code.
Then the neighbors $N_1(C)$ and $N_3(C)$ of  $C$
are also $s$-extremal and extremal singly even self-dual $[24k+20,12k+10,4k+4]$ codes.
\item
Let $C$ be an $s$-extremal singly even self-dual $[24k+12,12k+6,4k+2]$ code.
Then the neighbors $N_1(C)$ and $N_3(C)$ of  $C$
are $s$-extremal and extremal singly even self-dual $[24k+12,12k+6,4k+4]$ codes.
\end{enumerate}
\end{prop}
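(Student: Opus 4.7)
The plan is to read off the minimum weights of $N_1(C)$, $N_3(C)$ and their shadows directly from the four coset minima
\[
d_i := \min\{\wt(x) : 0\ne x\in C_i\}, \qquad i=0,1,2,3,
\]
since the paragraph before the statement gives $d(N_1(C))=\min(d_0,d_1)$, minimum shadow weight of $N_1(C)$ equal to $\min(d_2,d_3)$, and symmetrically for $N_3(C)$. Thus everything reduces to pinning down the four $d_i$ from the hypothesis on $(d(C),d(S))$ and then verifying \eqref{eq:BG} with equality for the neighbors.

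The preliminary step is a weight-parity analysis of the four cosets. By definition weights in $C_0$ are $\equiv 0\pmod 4$ and weights in $C_2$ are $\equiv 2\pmod 4$. In all three parts $n\equiv 4\pmod 8$, so by the discussion preceding the statement $N_1(C)$ and $N_3(C)$ are singly even self-dual (not doubly even). Using self-duality of $N_1(C)$ ($u\cdot v=0$, hence $\wt(u\cap v)$ is even, for $u,v\in C_1$) together with $u+v\in C_0$ (weight divisible by $4$), the identity $\wt(u+v)=\wt(u)+\wt(v)-2\wt(u\cap v)$ shows that all weights in $C_1$ share the same residue mod $4$, and the fact that $N_1(C)$ is singly even forces that residue to be $2$. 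The same applies to $C_3$.

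Each part is now a bookkeeping case. In (i), $d(C)=4k+2$ yields $d_0\ge 4k+4$ and $d_2=4k+2$, while $d(S)=4k+2$ yields $d_1,d_3\ge 4k+2$; hence $d(N_1(C))\in\{4k+2,4k+4\}$, and $4k+4$ is excluded because combined with $\min(d_2,d_3)=4k+2$ it violates \eqref{eq:BG} applied to $N_1(C)$, forcing $d(N_1(C))=4k+2$ and equality in \eqref{eq:BG}. In (ii), $d(C)=4k+4$ yields $d_0=4k+4$, $d_2\ge 4k+6$, and $d(S)=4k+6$ yields $d_1,d_3\ge 4k+6$, so $d(N_1(C))=4k+4$ and the shadow minimum is $\ge 4k+6$, with equality by \eqref{eq:BG}. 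In (iii), $d_0\ge 4k+4$, $d_2=4k+2$, $d_1,d_3\ge 4k+6$ give $d(N_1(C))\ge 4k+4$; the extremal bound for length $24k+12$ forces equality, and the shadow minimum then equals $4k+2$, matching \eqref{eq:BG}. The arguments for $N_3(C)$ are identical by the symmetric roles of $C_1$ and $C_3$.

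Of the three parts I expect (i) to be the most delicate: in (ii) the value $d(N_1(C))=4k+4$ is forced immediately by $d_0=4k+4$, and in (iii) it is forced by the extremal bound for length $24k+12$, but in (i) both candidates $4k+2$ and $4k+4$ respect the extremal bound for length $24k+4$, and the correct one is selected only by feeding the shadow information back into \eqref{eq:BG} applied to $N_1(C)$ itself. The weight-parity observation on $C_1,C_3$, though standard, is the real content of the proof; without it one cannot exclude $d_1,d_3\equiv 0\pmod 4$ and all three cases would leak.
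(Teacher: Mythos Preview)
Your proof is correct and follows essentially the same route as the paper's: read off $d(N_j(C))$ and its shadow minimum from the four coset minima $d_0,d_1,d_2,d_3$, then in part (i) eliminate the possibility $d(N_1(C))=4k+4$ via the shadow bound \eqref{eq:BG}, while in (ii) and (iii) the value of $d(N_1(C))$ is forced directly (by $d_0=4k+4$ in (ii), by the extremal bound in (iii)). The paper's argument in (i) is phrased as ``suppose $d(C_1)\ge 4k+6$'' and derives the same contradiction; your case split $d(N_1(C))\in\{4k+2,4k+4\}$ is logically the same.

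One remark on your closing commentary: the explicit mod-$4$ parity of $C_1,C_3$ is a correct and pleasant observation, but it is not actually the load-bearing step you suggest. In (i) one can reach $d(N_1(C))\in\{4k+2,4k+4\}$ using only that weights in a self-dual code are even together with the extremal bound $d\le 4k+4$; the exclusion of $4k+4$ then proceeds exactly as you wrote. In (ii) and (iii) the hypothesis already gives $d_1,d_3\ge 4k+6$ outright, so no parity information on $C_1,C_3$ is needed there either. The paper simply relies on the standard Conway--Sloane fact that shadow weights are congruent to $n/2\pmod 4$, which is why it jumps straight to ``$d(C_1)\ge 4k+6$''; your derivation of this fact from the self-duality of $N_1(C)$ is a nice self-contained addition but not essential to closing any of the three cases.
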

\begin{proof}
\begin{enumerate}
\renewcommand{\labelenumi}{\rm (\roman{enumi})}
\item 
Since $C$ has minimum weight $4k+2$, 
$d(C_0) \ge 4k+4$ and $d(C_2) =4k+2$.
Since the shadow of $C$ has minimum weight $4k+2$,
$d(C_1) \ge 4k+2$ and $d(C_3) \ge 4k+2$.
Let $S_1$ and $S_3$ denote the shadows of $N_1(C)$ and $N_3(C)$,
respectively.
Then we have
\begin{equation}\label{eq:24k+4}
d(S_1)=d(S_3)=4k+2. 
\end{equation}

Suppose that $d(C_1) \ge 4k+6$.  
From the upper bounds on the minimum weights of self-dual codes,
it holds that $d(N_1(C)) = 4k+4$.  
From~\eqref{eq:BG}, we have
\[
d(S_1) \le \frac{24k+4}{2} + 4 - 2d(N_1(C)) = 4k-2.
\]
This contradicts~\eqref{eq:24k+4}.
Hence, $d(C_1) = 4k+2$ and $d(N_1(C))=4k+2$.
Similarly, $d(C_3) = 4k+2$ and $d(N_3(C))=4k+2$.

\item
Since $C$ has minimum weight $4k+4$, 
$d(C_0)=4k+4$ and $d(C_2) \ge 4k+6$.
Since the shadow of $C$ has minimum weight $4k+6$,
$d(C_1) \ge 4k+6$ and $d(C_3) \ge 4k+6$.
By a argument similar to the proof of (i),
$N_1(C)$ and $N_3(C)$ have minimum weight $4k+4$, and
the shadows of $N_1(C)$ and $N_3(C)$
have minimum weight $4k+6$.

\item
Since $C$ has minimum weight $4k+2$, 
$d(C_0) \ge 4k+4$ and $d(C_2) =4k+2$.
Since the shadow of $C$ has minimum weight $4k+6$,
$d(C_1) \ge 4k+6$ and $d(C_3) \ge 4k+6$.
By a argument similar to the proof of (i),
$N_1(C)$ and $N_3(C)$ have minimum weight $4k+4$, and
the shadows of $N_1(C)$ and $N_3(C)$ have minimum weight $4k+2$.
\end{enumerate}
This completes the proof.
\end{proof}

The above method is used to construct
$s$-extremal and extremal singly even self-dual $[44,22,8]$ codes and
$s$-extremal and optimal singly even self-dual $[76,38,14]$ codes.



 \section{Existence of $s$-extremal singly even self-dual codes with
minimum weight 8}\label{sec:d8}

In this section, a great number of new 
$s$-extremal and extremal singly even self-dual codes with minimum weight $8$
are constructed for lengths $42$ and $44$.
The current knowledge on the existence of 
$s$-extremal singly even self-dual codes with minimum weight
$8$ is also presented.


\subsection{Range of lengths}
If there is an $s$-extremal
singly even self-dual $[n,n/2,d]$
code with $d \equiv 0 \pmod 4$, then $n \le 6d-4$~\cite[Proposition~3.1]{Kim}.
Hence, if there is an $s$-extremal
singly even self-dual $[n,n/2,8]$ code, then $n \le 44$.
Also, if there is a singly even self-dual $[n,n/2,8]$ code,
then $n=32,36$ and $n \ge 38$ (see~\cite{C-S}).
Up to equivalence, 
for lengths $32,36,38$ and $40$,
all extremal singly even self-dual codes are known
in~\cite{C-S}, \cite{MG08}, \cite{AGKSS} and~\cite{BBH}, respectively.
We note that the only lengths for which 
the classification of  extremal singly even self-dual codes
has not been done yet are $42$ and $44$
for minimum weight $8$.

\subsection{Length 42}
Currently, 
$17$ inequivalent $s$-extremal and extremal
singly even self-dual $[42,21,8]$ codes
are known (see~\cite[p.~30]{BG}).
The code $R4$ in~\cite[Table~III]{C-S} is the first known code.

Any self-dual code of length $n$ can be reached
from any other by taking successive neighbors (see~\cite{C-S}).
It is known that a self-dual code $C$ of length $n$ has
$2(2^{n/2-1}-1)$ self-dual neighbors.
These neighbors are constructed by finding
$2^{n/2-1}-1$ subcodes of codimension $1$ in $C$
containing the allone vector.
A computer program written in {\sc Magma},
which was used to find self-dual neighbors,
can be obtained electronically from\\
\url{http://www.math.is.tohoku.ac.jp/~mharada/Paper/neighbor.txt}.
By finding all $2(2^{20}-1)$ self-dual neighbors of $R4$,
we determined the equivalence classes among
$s$-extremal and extremal self-dual $[42,21,8]$ neighbors of $R4$.
The number of inequivalent $s$-extremal and extremal 
singly even self-dual $[42,21,8]$ neighbors of $R4$, which is not
equivalent to $R4$, is $48232$.
Hence, we have the following:

\begin{prop}\label{prop:42}
There are at least $48233$
inequivalent $s$-extremal and extremal singly even self-dual $[42,21,8]$ codes.
\end{prop}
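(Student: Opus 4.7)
The plan is to exhibit explicitly $48232$ pairwise inequivalent $s$-extremal and extremal singly even self-dual $[42,21,8]$ codes that are, in addition, inequivalent to the known code $R4$ of \cite[Table~III]{C-S}; together with $R4$ itself this gives the claimed lower bound of $48233$. Since any two self-dual codes of a given length are connected by a chain of neighbors, $R4$ is a natural seed, and taking its immediate neighbors keeps the search localized and exhaustive.

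First, I would recall from the discussion preceding the proposition that any self-dual code $C$ of length $n$ possesses exactly $2(2^{n/2-1}-1)$ self-dual neighbors, obtained by choosing a codimension-$1$ subcode of $C$ containing $\allone$ and then adjoining an appropriate coset representative. For $n=42$ this yields $2(2^{20}-1)$ candidate neighbors of $R4$, a finite set that can be enumerated by a short \textsc{Magma} script (the script referenced in the paper handles exactly this). Next, I would filter this list by computing, for each neighbor $N$, the minimum weight $d(N)$ and the minimum weight of its shadow $S(N)$, retaining only those with $d(N)=8$ and $d(S(N))=42/2+4-16=9$, i.e.\ those that are both extremal and $s$-extremal.

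Once the filtered list is in hand, the key step is to sort it into equivalence classes. I would first stratify by a cheap invariant such as the weight enumerator of the code and of its shadow (which are the same for all $s$-extremal extremal codes of fixed parameters, so this will not help by itself) and then by finer invariants such as the order of the automorphism group or the distribution of weights in the residual codes supported on low-weight codewords; after this preprocessing, each equivalence bucket can be reduced by pairwise \textsc{Magma} \texttt{IsEquivalent} calls. I would separately verify that $R4$ is not equivalent to any code in the resulting list of class representatives, so that adjoining $R4$ contributes a genuinely new class.

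The main obstacle is computational rather than conceptual: the naive enumeration involves roughly $2^{21}$ neighbors, and even after filtering the number of surviving codes is large, making the equivalence sorting the expensive phase. The only way this is tractable is to use robust invariants to cut the size of the buckets drastically before invoking \textsc{Magma}'s equivalence test; in particular, recording $|\mathrm{Aut}(N)|$ for each surviving $N$ and splitting by this value should make the remaining pairwise comparisons manageable. Aside from this engineering, the mathematical content is exactly the neighbor construction reviewed in Section~\ref{sec:con} together with the extremality and $s$-extremality conditions \eqref{eq:BG} verified directly from the weight enumerators.
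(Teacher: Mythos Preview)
Your proposal is correct and follows essentially the same route as the paper: enumerate all $2(2^{20}-1)$ self-dual neighbors of $R4$, retain those that are extremal with shadow of minimum weight $9$, sort them into equivalence classes (the paper does this with \textsc{Magma} as well), and observe that $48232$ of them are inequivalent to each other and to $R4$. One small slip: the neighbor construction you use here is the general one described at the start of the length-$42$ subsection, not the specific $N_1(C),N_3(C)$ construction of Section~\ref{sec:con}, which applies only to certain lengths.
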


The $48232$ codes are constructed as
\[
N_{42,i}=
\langle (R4 \cap \langle x_i \rangle^\perp), x_i \rangle,
\]
where $x_i$ can be obtained from\\
\url{http://www.math.is.tohoku.ac.jp/~mharada/Paper/42-21-8-se.txt}.


\subsection{Length 44} 
There are five inequivalent pure double circulant
extremal singly even self-dual $[44,22,8]$
codes and these codes are denoted by $P_{44,i}$
$(i=1,2,3,4,5)$~\cite[Table~2]{HGK}.
Note that the codes $P_{44,1}$ and $P_{44,2}$ are $s$-extremal.
By Proposition~\ref{prop:sec3}, two $s$-extremal and extremal 
singly even self-dual $[44,22,8]$ neighbors 
$N_1(P_{44,i})$ and $N_3(P_{44,i})$
are constructed from $P_{44,i}$ $(i=1,2)$.
We verified that 
$N_3(P_{44,1})$ is equivalent to $P_{44,2}$,
$N_3(P_{44,2})$ is equivalent to $P_{44,1}$ and
$N_1(P_{44,1})$ is equivalent to $N_1(P_{44,2})$.
In addition, we verified that $N_1(P_{44,1})$ is not equivalent to
$P_{44,i}$ $(i=1,2)$.
Thus, $N_1(P_{44,1})$ is a new 
$s$-extremal and extremal singly even self-dual $[44,22,8]$ code.

Moreover, we found all $s$-extremal and extremal 
singly even self-dual $[44,22,8]$ neighbors of 
$P_{44,i}$ $(i=1,2,3,4,5)$.  
The numbers of inequivalent
$s$-extremal and extremal singly even self-dual $[44,22,8]$ neighbors of
$P_{44,i}$, which are inequivalent to 
$P_{44,1}$ and $P_{44,2}$ $(i=1,2,3,4,5)$, are
$99$, 
$99$, 
$0$, 
$0$ and  
$0$, 
respectively.
We denote the $99$ inequivalent
$s$-extremal and extremal singly even self-dual $[44,22,8]$ neighbors of
$P_{44,1}$ (resp.\ $P_{44,2}$) by
$N_{44,1,j}$ (resp.\ $N_{44,2,j}$) $(j=1,2,\ldots,99)$.
The $198$ codes are constructed as
\[
N_{44,i,j}=
\langle (P_{44,i} \cap \langle x_{i,j} \rangle^\perp), x_{i,j} \rangle,
\]
where $x_{1,j}$ can be obtained from\\
\url{http://www.math.is.tohoku.ac.jp/~mharada/Paper/44-22-8-se-1.txt}
and $x_{2,j}$ can be obtained from\\
\url{http://www.math.is.tohoku.ac.jp/~mharada/Paper/44-22-8-se-2.txt}.
We verified that 
$N_{44,1,99}$ and $N_{44,2,99}$ are equivalent, and 
there is no pair of equivalent codes among the other codes.
Hence, there are at least $199$
inequivalent $s$-extremal and extremal singly even self-dual $[44,22,8]$ codes.
%
%
We remark that
$N_1(P_{44,1})$ is equivalent to $N_{44,1,99}$ and
$N_1(P_{44,2})$ is equivalent to $N_{44,2,99}$.

Up to equivalence, 
there is a unique extremal singly even self-dual $[46,23,10]$
code~\cite{HMT}.
The code $C_{46}$ has generator matrix
$
\left(\begin{array}{ccccc}
I_{23}  &  R_{23}  \\
\end{array}\right)
$, where $R_{23}$ is the circulant matrix with first row
\[
(0,0,0,1,0,0,0,1,1,0,0,1,0,1,1,0,0,1,0,1,0,0,1),
\]
and $I_n$ denotes the identity matrix of order $n$.
We verified that all $s$-extremal and extremal singly even self-dual
$[44,22,8]$ codes, which are obtained from $C_{46}$ by
subtracting two coordinates $i,j$,
are divided into $29$ equivalence classes.
Here, since the automorphism group of $C_{46}$ acts 
transitively on the coordinates,
we may assume that $i=1$.
Let $S_{44,1,i}$ denote the $s$-extremal and extremal singly even self-dual
$[44,22,8]$ code which is obtained from $C_{46}$ by
subtracting two coordinates $1,i$.
The $29$ inequivalent codes are constructed as $S_{44,1,i}$, where
\begin{multline*}
i=
 2, 3, 4, 5, 6, 7, 8, 9,10,11,12,24,25,26,27,
\\
28,29,31,32,33,34,36,37,38,39,40,41,43,46.
\end{multline*}
We verified that there is no pair of
equivalent codes among the $199$ codes and the $29$ codes.
Hence, we have the following:

\begin{prop}\label{prop:44}
There are at least $228$
inequivalent $s$-extremal and extremal singly even self-dual $[44,22,8]$ codes.
\end{prop}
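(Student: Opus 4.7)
The plan is to combine the $199$ inequivalent $s$-extremal and extremal singly even self-dual $[44,22,8]$ codes produced above from the pure double circulant codes $P_{44,1},\ldots,P_{44,5}$ with $29$ further such codes obtained by shortening (``subtracting'') the unique extremal singly even self-dual $[46,23,10]$ code $C_{46}$ at two coordinates. The whole content of the proposition is that no code in the second list is equivalent to any code in the first, giving $199+29=228$.

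First I would make the subtraction construction precise: for coordinates $1\le i<j\le 46$, let $S(i,j)$ be the length-$44$ binary code obtained from $C_{46}$ by taking $\{c\in C_{46}:c_i=c_j\}$ and then deleting coordinates $i$ and $j$. A short calculation using $c_i=c_j$ and the self-orthogonality of $C_{46}$ shows that $S(i,j)$ is again self-orthogonal of length $44$, and full dimension $22$ follows from $d(C_{46})=10>2$, which ensures that the shortened subcode injects under deletion. Because $C_{46}$ is unique up to equivalence by~\cite{HMT}, its automorphism group acts transitively on the $46$ coordinates, so one may fix $i=1$ and only inspect $j=2,\ldots,46$, leaving $45$ candidate codes.

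Second, using {\sc Magma} I would check for each of these $45$ codes that the minimum weight equals $8$ and the shadow has minimum weight $10$, so that by~\eqref{eq:BG} the code is $s$-extremal and extremal; sorting the survivors into equivalence classes should yield exactly the $29$ codes $S_{44,1,j}$ indexed by the values of $j$ displayed in the statement.

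Finally --- and this is the main step --- I would verify that the $29$ codes $S_{44,1,j}$ are inequivalent to the $199$ codes $\{P_{44,1},P_{44,2}\}\cup\{N_{44,i,k}\}$ previously constructed. This requires a large batch of pairwise equivalence tests, and is the expected bottleneck. To make the work tractable I would first compute cheap invariants for every code on both sides --- the weight enumerator, the shadow weight enumerator, and the order of the permutation automorphism group --- and run the expensive full equivalence test only on pairs that agree on every invariant. Once every such pair is confirmed inequivalent, the lower bound $228$ follows immediately from the construction.
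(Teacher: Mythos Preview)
Your proposal is correct and follows essentially the same approach as the paper: take the $199$ codes already obtained from the $P_{44,i}$, produce the subtracted codes $S_{44,1,j}$ from $C_{46}$ using transitivity of $\mathrm{Aut}(C_{46})$ to fix the first coordinate, sort them into $29$ equivalence classes, and verify by computer that no code in the second list is equivalent to any in the first. The only additions in your write-up are the explicit justification that $S(i,j)$ is self-dual of dimension $22$ and the suggestion to filter by cheap invariants before running full equivalence tests, both of which are routine elaborations of what the paper leaves implicit.
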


\subsection{Table $N_8(n)$}

We summarize in Table~\ref{Tab:d8} the number $N_{8}(n)$
of known inequivalent 
$s$-extremal singly even self-dual $[n,n/2,8]$ codes,
along with the references.

\begin{table}[thb]
\caption{Minimum weight $8$}
\label{Tab:d8}
\begin{center}
{\small
\begin{tabular}{c|c|c||c|c|c}
\noalign{\hrule height0.8pt}
$n$ & $N_{8}(n)$ & References &
$n$ & $N_{8}(n)$ & References \\
\hline
32 & $3$     & \cite{C-S} & 40 & $3597997 $ &\cite{BBH}\\
36 & $25$    & \cite{MG08} &42 & $\ge 48233$& Proposition~\ref{prop:42} \\
38 & $1730$  & \cite{AGKSS} &44 & $\ge 228$  & Proposition~\ref{prop:44} \\

 \noalign{\hrule height0.8pt}
\end{tabular}
}
\end{center}
\end{table}

\section{Existence of $s$-extremal singly even self-dual codes with
minimum weight 10}\label{sec:d10}

In this section, new
$s$-extremal singly even self-dual codes with minimum weight $10$
are constructed for length $56$.
Note that the largest minimum weight among known singly even
self-dual codes of length $56$ is $10$.
The current knowledge on the existence of 
$s$-extremal singly even self-dual codes with minimum weight
$10$ is also presented.


\subsection{Range of lengths}
If there is
an $s$-extremal  singly even self-dual $[n,n/2,10]$ code,
then $46 \le n \le 70$~\cite[Corollary~II.4]{HK08}.
For $n \in \{46,50,52,54,58\}$, the existence of 
$s$-extremal singly even self-dual
$[n,n/2,10]$ codes was mentioned in~\cite[p.~30]{BG}.
For $n \in \{48,50,52,54\}$, many 
$s$-extremal singly even self-dual $[n,n/2,10]$ codes are already known.


\subsection{Length 56}
Let $A$ and $B$ be the circulant matrices
with first rows
\[
(0,0,0,1,0,0,1,1,0,0,0,1,1,1) \text{ and }
(1,0,1,0,1,0,0,1,0,1,0,1,0,1),
\]
respectively.
Let $C_{56}$ be the $[56,28]$ code with generator matrix
\begin{equation} \label{eq:GM}
\left(
\begin{array}{ccc@{}c}
\quad & {\Large I_{28}} & \quad &
\begin{array}{cc}
A & B \\
B^T & A^T
\end{array}
\end{array}
\right),
\end{equation}
where $A^T$ denotes the transpose of a matrix $A$.
We verified that $C_{56}$ is an 
$s$-extremal singly even self-dual $[56,28,10]$ code.
In addition,
we found all inequivalent $s$-extremal
singly even self-dual $[56,28,10]$ neighbors of $C_{56}$.
The number of the neighbors, which is not
equivalent to $C_{56}$, is $20$.
The $20$ codes $N_{56,i}$ $(i=1,2,\ldots,20)$ are constructed as
\[
N_{56,i}=
\langle (C_{56} \cap \langle x_i \rangle^\perp), x_i \rangle,
\]
where the supports $\supp(x_i)$ of $x_i$ are listed in Table~\ref{Tab:56}.
Currently, $71$ inequivalent
$s$-extremal singly even self-dual $[56,28,10]$ codes are
known~\cite{HM}.
We verified that the known $71$ codes, $C_{56}$ and
$N_{56,i}$ $(i=1,2,\ldots,20)$ are inequivalent.
Hence, we have the following:

\begin{prop}\label{prop:56}
There are at least $92$
inequivalent $s$-extremal singly even self-dual $[56,28,10]$ codes.
\end{prop}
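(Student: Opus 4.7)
The plan is to combine three ingredients: (a) verify that the bordered double circulant code $C_{56}$ with generator matrix~\eqref{eq:GM} is $s$-extremal, (b) exhaust its self-dual neighbors to harvest inequivalent $s$-extremal companions, and (c) check inequivalence against the $71$ codes already listed in~\cite{HM}.

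First I would check directly that $C_{56}$ is self-dual of minimum weight $10$ (by computing the weight enumerator with \textsc{Magma}) and that its shadow has minimum weight $56/2+4-2\cdot 10=12$, which makes $C_{56}$ $s$-extremal. Next, I would run through all self-dual neighbors of $C_{56}$. As recalled in the length-$42$ subsection, a self-dual code of length $n$ has $2(2^{n/2-1}-1)$ self-dual neighbors, each obtained as $\langle C_{56}\cap\langle x\rangle^\perp,x\rangle$ for a suitable $x$; for $n=56$ this is $2(2^{27}-1)$ candidates, parameterized by the $2^{27}-1$ codimension-$1$ subcodes of $C_{56}$ containing the all-one vector. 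For each neighbor I would test self-duality, compute the minimum weight to retain only those with $d=10$, compute the shadow minimum weight to retain only the $s$-extremal ones, and then partition the resulting list into \textsc{Magma} equivalence classes using \texttt{IsEquivalent}. According to the excerpt, this yields exactly $20$ equivalence classes other than $[C_{56}]$, giving the codes $N_{56,1},\ldots,N_{56,20}$ whose support vectors are tabulated in Table~\ref{Tab:56}.

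Finally, I would merge the list $\{C_{56},N_{56,1},\ldots,N_{56,20}\}$ with the $71$ inequivalent $s$-extremal singly even self-dual $[56,28,10]$ codes from~\cite{HM} and apply \texttt{IsEquivalent} pairwise across the two families to confirm that no new code coincides with a previously known one. Since no such coincidence occurs, the union contains $71+1+20=92$ pairwise inequivalent codes, proving the bound.

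The only real obstacle is computational rather than conceptual: the exhaustive enumeration of $2(2^{27}-1)$ neighbors and the pairwise equivalence tests (both among the $21$ new codes and against the $71$ codes of~\cite{HM}) are expensive. However, standard \textsc{Magma} routines handle self-dual code equivalence for length $56$ in reasonable time, and the neighbor loop is embarrassingly parallel. Once these computations are carried out, the inequivalence claims needed for Proposition~\ref{prop:56} follow directly from the output.
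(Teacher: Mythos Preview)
Your proposal is correct and follows essentially the same computational strategy as the paper: verify that $C_{56}$ is $s$-extremal, enumerate its self-dual neighbors to extract the $20$ inequivalent $s$-extremal $[56,28,10]$ codes $N_{56,i}$, and then check pairwise inequivalence against the $71$ codes of~\cite{HM}. One terminological slip: the code $C_{56}$ with generator matrix~\eqref{eq:GM} is a four-circulant (block-circulant) construction, not a \emph{bordered} double circulant; this does not affect the argument.
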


\begin{table}[thb]
\caption{New $s$-extremal singly even self-dual $[56,28,10]$ codes}
\label{Tab:56}
\begin{center}
{\small
\begin{tabular}{c|l}
\noalign{\hrule height0.8pt}
Codes & \multicolumn{1}{c}{$\supp(x_i)$} \\
\hline
$N_{56, 1}$&$\{2,3,10,31,34,39,44,53,55,56\}$\\
$N_{56, 2}$&$\{1,11,12,15,25,28,29,36,46,53\}$\\
$N_{56, 3}$&$\{11,22,26,34,36,40,43,44,46,47\}$\\
$N_{56, 4}$&$\{4,7,13,17,23,27,30,32,44,48\}$\\
$N_{56, 5}$&$\{10,23,33,38,44,45,46,48,49,53\}$\\
$N_{56, 6}$&$\{2,10,21,24,27,36,41,48,49,50\}$\\
$N_{56, 7}$&$\{1,4,8,33,39,42,46,50,52,55\}$\\
$N_{56, 8}$&$\{8,12,13,18,23,24,28,33,44,51\}$\\
$N_{56, 9}$&$\{2,11,12,14,16,18,23,51,53,54\}$\\
$N_{56,10}$&$\{19,22,27,30,37,38,41,43,54,55\}$\\
$N_{56,11}$&$\{9,13,15,16,23,26,29,35,42,48\}$\\
$N_{56,12}$&$\{3,9,11,13,17,20,23,29,35,50\}$\\
$N_{56,13}$&$\{5,7,13,23,32,34,36,39,42,44\}$\\
$N_{56,14}$&$\{11,13,14,17,23,25,26,31,36,49\}$\\
$N_{56,15}$&$\{3,10,13,17,31,37,41,48,49,52\}$\\
$N_{56,16}$&$\{2,8,12,17,27,38,40,46,51,54\}$\\
$N_{56,17}$&$\{5,11,30,37,38,39,40,42,45,46\}$\\
$N_{56,18}$&$\{3,4,5,17,23,29,31,33,41,49\}$\\
$N_{56,19}$&$\{5,10,14,20,22,28,33,37,43,55\}$\\
$N_{56,20}$&$\{5,16,17,19,20,38,43,45,46,56\}$\\
 \noalign{\hrule height0.8pt}
\end{tabular}
}
\end{center}
\end{table}

\subsection{Table $N_{10}(n)$}

Currently, it is not known whether there is an 
$s$-extremal singly even self-dual $[n,n/2,10]$ codes 
for $n=60,62,64,66,68,70$.
We summarize in Table~\ref{Tab:d10} the number $N_{10}(n)$
of known inequivalent 
$s$-extremal singly even self-dual
$[n,n/2,10]$ codes, along with the references.

\begin{table}[thb]
\caption{Minimum weight $10$}
\label{Tab:d10}
\begin{center}
{\small
\begin{tabular}{c|c|c||c|c|c}
\noalign{\hrule height0.8pt}
$n$ & $N_{10}(n)$ & References &
$n$ & $N_{10}(n)$ & References \\
\hline
46& $1$       &\cite{HMT}  &56& $\ge 92$ & Proposition~\ref{prop:56}\\
48& $\ge 322$ &\cite{BYK}, \cite{C-S}, \cite{Venkov}&58& $\ge 3$   &\cite{HGK}\\
50& $\ge 1507$&\cite{HM02}                          &60& ? & \\
52& $\ge 460$ & \cite{HT} (see \cite[p.~30]{BG})
	 &$\vdots$&$\vdots$\\
54& $\ge 9115$&\cite{YL}   &70& ? & \\
 \noalign{\hrule height0.8pt}
\end{tabular}
}
\end{center}
\end{table}

There are $11$ inequivalent pure double circulant
singly even self-dual $[58,29,10]$ codes and
these codes are denoted by $P_{58,i}$ $(i=1,\ldots,11)$~\cite[Table~2]{HGK}.
Note that the codes $P_{58,i}$ $(i=1,2,3)$ are $s$-extremal.
We verified that $P_{58,i}$ $(i=1,2,3)$ has no $s$-extremal
singly even self-dual $[58,29,10]$ neighbor, which are
inequivalent to $P_{58,i}$ $(i=1,2,3)$.

We examine the construction of 
$s$-extremal singly even self-dual $[60,30,10]$ codes.
By Proposition~\ref{prop:sec3},
if there is an $s$-extremal singly even self-dual $[60,30,10]$ code $D$,
then the neighbors $N_1(D)$ and $N_3(D)$ of $D$
are $s$-extremal and extremal singly even self-dual $[60,30,12]$ codes.
If there is an $s$-extremal and extremal singly even self-dual $[60,30,12]$ code
$C$ such that $C_1$ contains no vector of weight $10$,
then the neighbor $N_3(C)$ of $C$ is 
an $s$-extremal singly even self-dual $[60,30,10]$ code.
In this case, by~\cite[Theorem~5]{C-S},
the weight enumerators $W_1$ and $W_3$
of $C_1$ and $C_3$ are uniquely determined as follows
\begin{align*}
W_1=&
 33600y^{14} + 1717760y^{18} + 26376960y^{22} + 130152960y^{26}
\\&
 + 220308352y^{30} + \cdots,
\\
W_3=&
396y^{10} + 29640y^{14} + 1735580y^{18} + 26329440y^{22} +
130236120y^{26}
\\& + 
220208560y^{30} + \cdots.
\end{align*}
Currently, $13$ inequivalent 
$s$-extremal and extremal singly even self-dual $[60,30,12]$ codes are known
(see Table~\ref{Tab:d12}).
We verified that both $C_1$ and $C_3$ contain a vector of
weight $10$ for each $C$ of the $13$ codes.

\section{Existence of $s$-extremal singly even self-dual codes with
minimum weight 12}\label{sec:d12}

In this section, 
new $s$-extremal and extremal singly even self-dual codes with minimum weight $12$
are constructed for lengths $62$ and $64$.
The current knowledge on the existence of 
$s$-extremal singly even self-dual codes with minimum weight
$12$ is also presented.

\subsection{Range of lengths}

By~\cite[Proposition~3.1]{Kim},
if there is an $s$-extremal
singly even self-dual $[n,n/2,12]$ code, then $n \le 68$.
In addition,
if there is a singly even self-dual  $[n,n/2,12]$ code with $n \le 68$, 
then $n \in \{56,60,62,64,66,68\}$~\cite{C-S}.

\subsection{Length 60}
Three inequivalent
$s$-extremal and extremal singly even self-dual $[60,30,12]$ 
codes are known~\cite{DH62} and~\cite{TJ98}.
Recently, $13$ inequivalent
$s$-extremal and extremal singly even self-dual $[60,30,12]$ 
codes have been found~\cite{H18}.
We verified that three codes of the $13$ codes are
equivalent to the codes in~\cite{DH62} and~\cite{TJ98}.


\subsection{Length 62} 
Eight inequivalent
$s$-extremal and extremal singly even self-dual $[62,31,12]$ codes are 
known~\cite{DH62}.
One more $s$-extremal and extremal singly even self-dual $[62,31,12]$ 
code was constructed~\cite{RY}.

Let $C$ be a singly even self-dual code of length $n$.
Let $T$ be a coset of $C$, say $T=t+C$, where $t \not\in C$.
Let $C^0$ denote the subcode of $C$
consisting of all codewords which are orthogonal to $t$.
Then there are cosets
$C^1,C^2,C^3$ of $C^0$ such that ${C^0}^\perp = C^0 \cup C^1 \cup
C^2 \cup C^3$, where $C = C^0  \cup C^2$ and $T = C^1 \cup C^3$.
If the weight of $t$ is odd, then
\[
C^+(t)= (0,0,C^0) \cup  (1,1,C^2) \cup  (0,1,C^1) \cup  (1,0,C^3)
\]
is a self-dual code of length $n+2$~\cite{Tsai92}.
We found an $s$-extremal and extremal singly even self-dual $[62,31,12]$ 
code by the above construction.
Consider the extremal self-dual $[60,30,12]$ code
$C'_{62,6}$ in~\cite{DH62} and
the vector $t$ of length $60$ having the following support
\[
\{ 1, 3, 7, 9, 31, 37, 38, 48, 50, 53, 58 \}.
\]
We verified that the code  $(C'_{62,6})^+(t)$ is an
$s$-extremal and extremal singly even self-dual $[62,31,12]$ code
with automorphism group of order $5$.
Since none of the known nine codes has automorphism group
of order $5$, we have the following:

\begin{prop}\label{prop:62}
There are at least $10$ inequivalent
$s$-extremal and extremal singly even self-dual $[62,31,12]$ codes.
\end{prop}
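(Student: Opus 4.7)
The plan is to produce the tenth code by Tsai's $+$-construction and then separate it from the nine codes already in the literature by an invariant of the automorphism group. Concretely, let $C=C'_{62,6}$ be the extremal self-dual $[60,30,12]$ code of~\cite{DH62} and let $t\in\FF_2^{60}$ be the odd-weight vector with support $\{1,3,7,9,31,37,38,48,50,53,58\}$. I would first record the general fact (reviewed in the paragraph preceding the proposition) that if $t\not\in C$ has odd weight, then
\[
C^+(t)=(0,0,C^0)\cup(1,1,C^2)\cup(0,1,C^1)\cup(1,0,C^3)
\]
is a singly even self-dual code of length $62$, where $C^0$ is the subcode of $C$ orthogonal to $t$ and $C^1,C^2,C^3$ are the remaining cosets of $C^0$ in $(C^0)^{\perp}$, with $C=C^0\cup C^2$ and $t+C=C^1\cup C^3$. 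So $(C'_{62,6})^+(t)$ is automatically a singly even self-dual $[62,31]$ code.

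Next I would verify in \textsc{Magma} the two numerical conditions that make the code $s$-extremal and extremal. For $n=62$ we are not in the exceptional length class $n\equiv 22\pmod{24}$, so the $s$-extremality requirement reduces, by~\eqref{eq:BG}, to showing that the code has minimum weight $12$ and that its shadow has minimum weight $n/2+4-2\cdot 12=11$. Both are straightforward minimum-weight computations on the explicit generator matrix of $(C'_{62,6})^+(t)$ and on $C_0^{\perp}\setminus C$.

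The separation step, which is really the only point where one has to be careful, is showing that this new code is inequivalent to each of the nine codes already known (the eight of~\cite{DH62} and the one of~\cite{RY}). Rather than performing nine equivalence tests directly, I would use the order of the automorphism group as an invariant: compute $|\mathrm{Aut}((C'_{62,6})^+(t))|$ in \textsc{Magma} and check that it equals $5$, then cite the fact (verifiable from the published descriptions of the nine known codes) that none of them has automorphism group of order $5$. Since equivalent codes have automorphism groups of the same order, this single invariant forces inequivalence with all nine, yielding at least $10$ inequivalent $s$-extremal and extremal singly even self-dual $[62,31,12]$ codes.

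The main obstacle is conceptually mild but computationally essential: one must actually compute $|\mathrm{Aut}((C'_{62,6})^+(t))|$ and confirm it is $5$ (not a multiple of $5$ that happened to match some unnoticed known code), and one must reliably know the automorphism group orders of the nine previously constructed codes. Once those two pieces of data are in hand, the proof is complete.
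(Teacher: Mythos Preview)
Your proposal is correct and follows essentially the same route as the paper: construct $(C'_{62,6})^+(t)$ via Tsai's construction with the given $t$, verify in \textsc{Magma} that it is an $s$-extremal and extremal singly even self-dual $[62,31,12]$ code, compute that its automorphism group has order~$5$, and conclude inequivalence with the nine known codes because none of those has automorphism group of order~$5$. The only minor addition in your write-up is the explicit identification of the target shadow minimum weight $n/2+4-2\cdot 12=11$, which the paper leaves implicit.
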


\subsection{Length 64} 
Two inequivalent $s$-extremal and extremal singly even self-dual $[64,32,12]$ codes
were constructed in~\cite{CHK}.
We denote by $C_{64,1}$ and $C_{64,2}$
the $s$-extremal and extremal singly even self-dual $[64,32,12]$ codes
listed in~\cite[Tables~7 and 8]{CHK}, respectively.
Recently, $22$ more $s$-extremal and extremal singly even self-dual $[64,32,12]$ codes
have been constructed in~\cite{AHY}.
We found one more $s$-extremal and extremal singly even self-dual $[64,32,12]$ code.
The code $F_1$ in~\cite{CHK} is an extremal singly even self-dual
$[64,32,12]$ code and its generator matrix is listed 
in~\cite[Table~3]{CHK}.
We define the code $N_{64}$ as follows
\[
N_{64}=
\langle (F_1 \cap \langle x_1,x_2 \rangle^\perp), x_1,x_2 \rangle,
\]
where 
\begin{align*}
\supp(x_1)=&
\{ 23, 31, 33, 36, 39, 42, 44, 46, 49, 50, 55, 57, 59, \ldots, 64 \},
\\
\supp(x_2)=&
\{ 34, 35, 37, 39, 40, 41, 42, 43, 44, 47, 49, 51, 53, 54 \}.
\end{align*}
We verified that $N_{64}$ is an 
$s$-extremal and extremal singly even self-dual $[64,32,12]$ code,
which is equivalent to none of the above $24$ codes.
Therefore, we have the following:

\begin{prop}\label{prop:64}
There are at least $25$ inequivalent
$s$-extremal and extremal singly even self-dual $[64,32,12]$ codes.
\end{prop}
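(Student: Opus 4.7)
The proof will proceed by direct computational verification in \textsc{Magma}, following the template used for Propositions~\ref{prop:42}, \ref{prop:56} and~\ref{prop:62}. I would first realize $F_1$ from its generator matrix in~\cite[Table~3]{CHK} and form the code $N_{64}=\langle (F_1 \cap \langle x_1,x_2 \rangle^\perp), x_1,x_2 \rangle$ for the two vectors $x_1, x_2$ with the supports displayed in the statement. To see that this construction yields a self-dual $[64,32]$ code, I would check that $x_1$ and $x_2$ both have even weight, that $x_1 \cdot x_2 = 0$, and that $x_1, x_2$ are linearly independent modulo $F_1$; together with the self-duality of $F_1$ these imply that $N_{64}$ is a $32$-dimensional subspace of its own dual, hence self-dual.

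Next I would compute the weight distribution of $N_{64}$ in \textsc{Magma}, read off $d(N_{64})=12$, and exhibit a codeword of weight $\equiv 2 \pmod 4$ so that $N_{64}$ is singly even, and therefore extremal among singly even self-dual codes of length $64$. For $s$-extremality I would form the doubly even subcode $C_0$ of $N_{64}$, build $S=C_0^\perp \setminus N_{64}$, and verify $d(S)=12$; the inequality $d(S)\le 12$ is automatic from~\eqref{eq:BG} applied with $n=64$ and $d(C)=12$, so only the lower bound $d(S)\ge 12$ has to be confirmed by direct enumeration of minimum-weight vectors in $C_0^\perp$.

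The main step, which I expect to be the computational bottleneck, is to verify that $N_{64}$ is inequivalent to each of the $24$ previously known $s$-extremal and extremal singly even self-dual $[64,32,12]$ codes: the two codes $C_{64,1}, C_{64,2}$ of~\cite{CHK} and the $22$ codes of~\cite{AHY}. My plan is to first compare cheap invariants (the order of the automorphism group, the weight enumerator of the shadow, and the number of weight-$12$ codewords) of $N_{64}$ with those of each of the $24$ codes; if some invariant separates $N_{64}$ from all of them, inequivalence is immediate. For any code whose invariants happen to agree with those of $N_{64}$, I would invoke \textsc{Magma}'s built-in equivalence test directly. Once inequivalence with all $24$ codes is confirmed, adjoining $N_{64}$ to the previously known list produces $25$ inequivalent codes, proving the proposition.
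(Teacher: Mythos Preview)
Your proposal is correct and follows essentially the same route as the paper: construct $N_{64}$ from $F_1$ via the two vectors $x_1,x_2$, verify by machine that it is a singly even self-dual $[64,32,12]$ code whose shadow has minimum weight $12$, and then check inequivalence with the $24$ codes from~\cite{CHK} and~\cite{AHY}. The paper records only ``We verified'' for each of these steps, whereas you spell out the intermediate checks (even weights, $x_1\cdot x_2=0$, independence modulo $F_1$, weight enumerator, shadow computation, invariant comparison before full equivalence testing); this is extra detail but not a different argument.
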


\subsection{Lengths 66 and 68} 
There are three inequivalent pure double circulant
extremal singly even self-dual $[66,33,12]$
codes and these codes are denoted by
$C_{66,i}$ $(i=1,21,25)$~\cite[Table~3]{GH}.
The two codes $C_{66,i}$ $(i=1,21)$ are
$s$-extremal and extremal singly even self-dual $[66,33,12]$ codes, while
$C_{66,21}$ is equivalent to D16 in~\cite[Table~III]{C-S}.

Recently, six inequivalent $s$-extremal and extremal singly even self-dual
$[68,34,12]$ codes have been found in~\cite{YIL},
under the assumption that they possess an automorphism of order $7$.

\subsection{Table $N_{12}(n)$}

If $n \in \{60,62,64,66,68\}$, then
there is an $s$-extremal singly even self-dual
$[n,n/2,12]$ code.
We summarize in Table~\ref{Tab:d12} the number $N_{12}(n)$
of known inequivalent 
$s$-extremal singly even self-dual $[n,n/2,12]$ codes,
along with the references.
%

\begin{table}[thb]
\caption{Minimum weight $12$}
\label{Tab:d12}
\begin{center}
{\small
\begin{tabular}{c|c|c||c|c|c}
\noalign{\hrule height0.8pt}
$n$ & $N_{12}(n)$ & References &
$n$ & $N_{12}(n)$ & References \\
\hline
$56$& ?        & &$64$& $\ge 25$& Proposition~\ref{prop:64}\\
$60$& $\ge 13$ & \cite{DH62}, \cite{H18}, \cite{TJ98}
	 &$66$& $\ge 2$ &\cite{C-S}, \cite{GH} \\
$62$& $\ge 10$ & Proposition~\ref{prop:62}&$68$& $\ge 6$ &\cite{YIL}\\
\noalign{\hrule height0.8pt}
\end{tabular}
}
\end{center}
\end{table}


\section{Existence of $s$-extremal singly even self-dual codes with
minimum weight 14}\label{sec:d14}


In this section,
many new $s$-extremal singly even self-dual codes with minimum weight $14$
are constructed for length $78$.
Note that the largest minimum weight among singly even
self-dual codes of length $78$ is $14$ (see \cite{DGH}).
The current knowledge on the existence of 
$s$-extremal singly even self-dual codes with minimum weight
$14$ is also presented.

\subsection{Range of lengths}
If there is
an $s$-extremal  singly even self-dual
$[n,n/2,14]$ code, then $70 \le n \le 94$~\cite[Corollary~II.4]{HK08}.
Currently,
$s$-extremal singly even self-dual codes with minimum weight $14$
are known for only lengths $76$ and $78$ (see~\cite[p.~30]{BG}).

\subsection{Length 76}
An $s$-extremal and optimal singly even self-dual $[76,38,14]$ code having
automorphism of order $19$ was constructed in~\cite{BY}.
This code is denoted by $C_{76}$ in~\cite{BY}.
By Proposition~\ref{prop:sec3}, two 
$s$-extremal and optimal singly even self-dual $[76,38,14]$ codes 
$N_1(C_{76})$ and $N_3(C_{76})$ are constructed from $C_{76}$.
We verified that 
$C_{76}$, $N_1(C_{76})$ and $N_3(C_{76})$ are inequivalent, and
$N_1(C_{76})$ and $N_3(C_{76})$ have automorphisms of order $19$.
It was shown in~\cite{DY} that there are three inequivalent
$s$-extremal and optimal singly even self-dual $[76,38,14]$ codes
with an automorphism of order $19$.
Hence, $N_i(C_{76})$ $(i=1,3)$ is equivalent to one of 
the codes in~\cite{DY}.

%

\subsection{Length 78}
By Lemma 2.4 in~\cite{BG},
a singly even self-dual code constructed from an extremal
doubly even self-dual $[80,40,16]$ code by subtracting
is an $s$-extremal and optimal singly even self-dual $[78,39,14]$ code.
There are ten inequivalent extremal double circulant doubly even self-dual
$[80,40,16]$ codes~\cite{GH2}.
These codes are denoted by $P_{80,i}$ $(i=1,2,\ldots,6)$
and $B_{80,i}$ $(i=1,2,3,4)$.
It was shown in~\cite{DH80} that there are $11$ extremal
doubly even self-dual $[80,40,16]$ codes with an automorphism of
order 19, up to equivalence.  
The $11$ codes are denoted by $C_{80,1}, C_{80,2},\ldots,C_{80,11}$.
We verified that all 
$s$-extremal and optimal singly even self-dual $[78,39,14]$ codes
constructed from the $21$ codes by subtracting the pairs of
coordinates are divided into $1942$ equivalent classes.
The $1942$ inequivalent codes are constructed from
the codes $C$ by subtracting the pairs $(i,j)$ of coordinates,
where the sets of the pairs are given by $S_k$ $(k=1,2,\ldots,8)$ 
as follows
\begin{center}
\begin{tabular}{c|c||c|c}
$S_k$ & $C$ & $S_k$ & $C$ \\
\hline
$S_1$ & $P_{80,1}$               &$S_5$ & $C_{80,i}$ $(i=1, 3, 6, 9)$ \\
$S_2$ & $P_{80,i}$ $(i=2,3,4,5)$ &$S_6$ & $C_{80,i}$ $(i=2, 4, 8, 10, 11)$ \\
$S_3$ & $P_{80,6}$               &$S_7$ & $C_{80,5}$\\ 
$S_4$ & $B_{80,i}$ $(i=1,2,3,4)$ &$S_8$ & $C_{80,7}$ \\
\end{tabular}
\end{center}
The sets $S_k$ $(k=1,2,\ldots,8)$ 
can be obtained from\\
\url{http://www.math.is.tohoku.ac.jp/~mharada/Paper/78-39-14-se.txt}.

\begin{prop}\label{prop:78}
There are at least $1942$ inequivalent 
$s$-extremal and optimal singly even self-dual $[78,39,14]$ codes.
\end{prop}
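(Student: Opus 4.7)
The plan is to exploit Lemma~2.4 of~\cite{BG}, which guarantees that subtracting any pair of coordinates from an extremal doubly even self-dual $[80,40,16]$ code yields an $s$-extremal and optimal singly even self-dual $[78,39,14]$ code. Thus the only work is to enumerate these codes and sort them into equivalence classes. First I would collect the $21$ extremal doubly even self-dual $[80,40,16]$ codes on which the argument is based, namely the ten double circulant codes $P_{80,i}$ ($i=1,\dots,6$) and $B_{80,i}$ ($i=1,\dots,4$) from~\cite{GH2}, together with the eleven codes $C_{80,1},\dots,C_{80,11}$ from~\cite{DH80} that possess an automorphism of order $19$.

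For each of these $21$ codes $C$, rather than running over all $\binom{80}{2}=3160$ pairs I would first compute $\mathrm{Aut}(C)$ in {\sc Magma} and choose a transversal of orbit representatives of unordered pairs $\{i,j\}$ under the action of $\mathrm{Aut}(C)$ on $2$-subsets of coordinates. This is essential: two pairs in the same orbit yield equivalent singly even self-dual codes after subtracting, so restricting to orbit representatives both cuts the search and ensures no trivial duplication. For each representative I construct the subtracted code and record it; by Lemma~2.4 of~\cite{BG} each such code is automatically $s$-extremal and optimal, so no extra weight-enumerator check is needed beyond a safety verification of its minimum weight and shadow minimum weight.

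Once the full list of candidate $[78,39,14]$ codes is assembled, I would partition it into equivalence classes by repeatedly invoking {\sc Magma}'s \texttt{IsEquivalent} on pairs of codes, using invariants (weight enumerators, shadow weight enumerators, orders of automorphism groups) to prune the comparisons beforehand so that only codes agreeing on all these invariants need to be tested directly. The expected output is the eight sets $S_1,\dots,S_8$ of coordinate pairs described in the table above, each attached to a specific representative code among the $21$, containing in total $1942$ orbit representatives whose associated subtracted codes are pairwise inequivalent. This establishes the lower bound.

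The main obstacle is computational rather than conceptual: the number of pairwise equivalence tests grows quadratically in the raw list of constructed codes, and \texttt{IsEquivalent} on $[78,39]$ codes is expensive. Managing this efficiently requires heavy use of the aforementioned invariants to bucket the codes before any direct equivalence call, and it is also crucial that the orbit enumeration under $\mathrm{Aut}(C)$ for each of the $21$ starting codes is done once and cached, since some of these automorphism groups are large and a naive recomputation would dominate the runtime. A secondary subtlety is verifying that no cross-construction coincidences are missed, i.e.\ that codes coming from different $C$ (for instance a $P_{80,i}$ and a $C_{80,j}$) are properly tested against each other rather than only within the same starting code.
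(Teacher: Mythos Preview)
Your proposal is correct and follows essentially the same approach as the paper: apply Lemma~2.4 of~\cite{BG} to the $21$ known extremal doubly even self-dual $[80,40,16]$ codes (the ten double circulant codes from~\cite{GH2} and the eleven codes from~\cite{DH80}), construct all subtracted $[78,39,14]$ codes, and classify them up to equivalence in {\sc Magma} to obtain the $1942$ classes. Your added implementation details (orbit representatives of $2$-subsets under $\mathrm{Aut}(C)$, invariant-based bucketing before \texttt{IsEquivalent}) are sensible optimizations that the paper leaves implicit but do not change the substance of the argument.
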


\subsection{Table $N_{14}(n)$}

We summarize in Table~\ref{Tab:d12} the number $N_{14}(n)$
of known inequivalent 
$s$-extremal singly even self-dual $[n,n/2,14]$ codes,
along with the references.

\begin{table}[thb]
\caption{Minimum weight $14$}
\label{Tab:d14}
\begin{center}
{\small
\begin{tabular}{c|c|c||c|c|c}
\noalign{\hrule height0.8pt}
$n$ & $N_{14}(n)$ & References &
$n$ & $N_{14}(n)$ & References \\
\hline
70 & ?        & &78 & $\ge 1942$    & Proposition~\ref{prop:78} \\
72 & ?        & &80 & ? &\\
74 & ?        & &$\vdots$ & $\vdots$ &\\
76 & $\ge 3$  &\cite{BY}, \cite{DY} &94 & ? &\\
\noalign{\hrule height0.8pt}
\end{tabular}
}
\end{center}
\end{table}

It was shown in~\cite{HM} that there is
an $s$-extremal singly even self-dual $[24k+8,12k+4,4k+2]$ code
if and only if
there is an extremal doubly even self-dual
$[24k+8,12k+4,4k+4]$ code
with covering radius $4k+2$.
The covering radii of some extremal doubly even self-dual codes of
length $80$ were determined in~\cite{HM}.
It is an open problem to determine whether there is
an extremal doubly even self-dual $[80,40,16]$ code with
covering radius $14$.

\section{Remarks on $s$-extremal singly even self-dual codes with
minimum weights at least 16}
For minimum weight $d \ge 16$, currently only two
$s$-extremal singly even self-dual codes are known.
More precisely, an $s$-extremal and extremal singly even self-dual
$[86,43,16]$ code and an $s$-extremal singly even self-dual
$[102,51,18]$ code are known (see~\cite[p.~30]{BG}).

\bigskip
\noindent
{\bf Acknowledgments.}
This work is supported by JSPS KAKENHI Grant Number 19H01802.
The author would like to thank Akihiro Munemasa
and Vladimir D. Tonchev for useful comments.



\end{document}